\providecommand{\U}[1]{\protect\rule{.1in}{.1in}}
\newtheorem{theorem}{Theorem}[section]
\newtheorem{proposition}[theorem]{Proposition}
\newtheorem{corollary}[theorem]{Corollary}
\newtheorem{example}[theorem]{Example}
\newtheorem{lemma}[theorem]{Lemma}
\newtheorem{final remark}[theorem]{Final Remark}
\newtheorem{definition}[theorem]{Definition}
\begin{document}

\title{Spaceability of sets of non-injective maps}
\author{Mikaela Aires\thanks{Supported by a CNPq scholarship}~~and Geraldo Botelho\thanks{Supported by FAPEMIG grants RED-00133-21 and APQ-01853-23. \newline 2020 Mathematics Subject Classification: 15A03,  46B87, 47B10, 47L22.\newline Keywords: Spaceability, lineabillity, sequence spaces, function spaces, operator and polynomial ideals, Lipschitz functions. }}
\date{}
\maketitle

\begin{abstract} The purpose of this paper is to extend to spaces of nonlinear operators, and also to more general spaces of linear operators, a recent result on lineability of sets of non-injective linear operators. We prove, for quite general spaces $A$ of (linear or nonlinear) maps from an arbitraty set to a sequence space, that, for every $0 \neq f \in A$, the subset of $A$ of non-injective maps contains an infinite dimensional subspace of $A$ containing $f$. We provide applications to spaces of linear operators between quasi-Banach spaces, to spaces of linear operators belonging to an operator ideal, and, in the nonlinear setting, to spaces of homogeneous polynomials and to spaces of vector-valued Lipschitz functions on metric spaces.
\end{abstract}

\section{Introduction}

Let $A$ be a nonlinear set formed by vectors of an infinite dimensional linear space $E$ satisfying some distinguished property, and let $\alpha$ be a cardinal number not greater than the dimension of $E$. One of the purposes of the fashionable subject of lineability is to decide whether or not there exists an $\alpha$-dimensional subspace $W$ of $E$ such that $W \subset A \cup \{0\}$. If yes, the set $A$ is said to be {\it $\alpha$-lineable}. If $E$ is a topological vector space and $W$ can be chosen to be closed, then $A$ is said to be {\it $\alpha$-spaceable}. For a comprehensive account of this subject, see \cite{Aron}. Lineability and spaceability in spaces of nonlinear operators have already been studied, for example, in spaces of homogeneous polynomials \cite{matos, gamez} and in spaces of Lipschitz functions \cite{aviles, dantas}.
  
  A standard technique in the field consists in fixing $0 \neq x \in A$ and manipulating $x$ conveniently in such a way to construct the subspace $W$. It just so happens that, sometimes, the mother vector $x$ unfortunately does not belong to $W$. Applications of the mother vector technique with happy endings were studied in \cite{Pellegrino}, where $A$ is said to be {\it pointwise $\alpha$-lineable (pointwise $\alpha$-spaceable}) if, for every  $x \in A$, there is a (closed) $\alpha$-dimensional subspace $W$ of $E$ such that $x \in W \subset A \cup \{0\}$. For cardinal numbers $\alpha$ and $\beta$, the quite general notions of $(\alpha,\beta)$-lineability/spaceability were introduced in \cite{Favaro}. Pointwise $\alpha$-lineability/spaceability is closely related to $(1, \alpha)$-lineability/spaceability, but in general these notions are not equivalent (see \cite[Example 2.2]{Pellegrino}). However, for sets of non-injective maps, which happens to be the subject of this paper, these notions are equivalent. Therefore, denoting by $\mathfrak{c}$ the cardinality of the continuum, in our results it is irrelevant if we write that a set is pointwise $\mathfrak{c}$-lineable/spaceable or $(1, \mathfrak{c})$-lineable/spaceable. One word more about terminology: since infinite dimensional Banach spaces have dimension not smaller than $\mathfrak{c}$, we shall write {\it pointwise spaceable} instead of {\it pointwise $\mathfrak{c}$-spaceable}, which, in our context of sets of non-injective maps, is equivalent to $(1, \mathfrak{c})$-spaceable.

Pointwise lineability and $(1, \mathfrak{c})$-lineability of sets of injective/non-injective bounded linear operators between classical spaces were studied in \cite{Diniz, Diniz1, Favaro}. The purpose of this paper is to extend the result we state next to (possibly non-normed) spaces of (possibly nonlinear) operators. 

\begin{theorem} {\rm {\cite[Theorem 3.1]{Favaro}}}\label{6}
For $1 \leq p,q < \infty$, the set of non-injective bounded linear operators from $\ell_p$ to $\ell_q$ is pointwise $\mathfrak{c}$-lineable.
\end{theorem}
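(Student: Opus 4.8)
The plan is to fix a non-injective bounded linear operator $T : \ell_p \to \ell_q$ and construct a $\mathfrak{c}$-dimensional subspace of $\mathcal{L}(\ell_p, \ell_q)$ containing $T$, all of whose nonzero elements are non-injective. Since $T$ is non-injective, pick $0 \neq x_0 \in \ell_p$ with $Tx_0 = 0$. The idea is to build operators that annihilate a common nonzero vector: any such operator is automatically non-injective, and the span of a family of them again annihilates that vector, so the span consists of non-injective operators together with $0$. Thus I would produce a family $(S_t)_{t \in \Gamma}$, indexed by a set $\Gamma$ of cardinality $\mathfrak{c}$, of linearly independent operators in $\mathcal{L}(\ell_p, \ell_q)$, each vanishing at some fixed nonzero vector, and chosen so that $T$ itself lies in their span (or so that adjoining $T$ keeps the family independent and the annihilation property is preserved).

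Concretely, first I would decompose $\ell_p = \overline{\operatorname{span}}\{e_1\} \oplus M$ where $M = \overline{\operatorname{span}}\{e_n : n \geq 2\}$, after a change of basis so that $x_0$ corresponds to $e_1$; then $T$ factors through the quotient by $e_1$, i.e. $T$ vanishes on $\operatorname{span}\{e_1\}$. Next I would exhibit a $\mathfrak{c}$-dimensional family of operators from $\ell_p$ to $\ell_q$ that also vanish on $e_1$ and are linearly independent modulo $T$. A convenient source of such a family: for each $t \in (0,1)$ let $v_t = (t, t^2, t^3, \ldots) \in \ell_q$ (these vectors are well known to be linearly independent, indeed they form a "Müntz-type" independent set of size $\mathfrak{c}$), and define $S_t : \ell_p \to \ell_q$ by $S_t(\sum a_n e_n) = a_2\, v_t$, which is bounded since $|a_2| \leq \|(a_n)\|_p$ and $\|v_t\|_q < \infty$. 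Each $S_t$ kills $e_1$, hence is non-injective, and the family $\{S_t : t \in (0,1)\} \cup \{T\}$ is linearly independent because the $v_t$ are independent and $T$ cannot be a finite linear combination of the rank-one operators $S_t$ unless that combination is $0$ (compare the action on $e_2$ versus on vectors outside $\operatorname{span}\{e_1,e_2\}$; if $T$ had rank one supported this way it is still fine — one adjusts by replacing $T$ with $T$ plus one of the $S_t$ if needed, or chooses the $v_t$ to avoid the range of $T$). Then $W := \overline{\operatorname{span}}\big(\{T\} \cup \{S_t : t \in (0,1)\}\big)$... actually for lineability we need not close it: set $W = \operatorname{span}(\{T\} \cup \{S_t\})$, which is $\mathfrak{c}$-dimensional, contains $T$, and every nonzero element vanishes at $e_1$, hence is non-injective.

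The one genuine point to verify carefully is the linear independence of $\{T\} \cup \{S_t : t \in (0,1)\}$, and more precisely that no nontrivial finite combination $\lambda T + \sum_{i=1}^k \mu_i S_{t_i}$ is injective. If such a combination is nonzero, I must show it has nontrivial kernel. It already vanishes on $e_1$, so it is non-injective as soon as it is well-defined and nonzero — so the only thing to rule out is that the combination is the zero operator while the coefficients are not all zero; this is exactly linear independence. Evaluating at $e_1$ gives nothing new (everything vanishes), so I evaluate on $M = \overline{\operatorname{span}}\{e_n : n \geq 2\}$: there $\sum \mu_i S_{t_i}$ is the rank-one (or finite-rank) operator $a \mapsto a_2 \sum \mu_i v_{t_i}$, and independence of the $v_{t_i}$ forces all $\mu_i = 0$ once we know $T|_M$ is not of this special form. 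The clean way around any clash is to note that the set of "bad" directions (operators $T$ already lying in such a span) is small, or simply to first replace $T$ by $T' = T + S_{t_0}$ for one fixed $t_0$: $T'$ is still non-injective (kills $e_1$), $T \in \operatorname{span}\{T', S_{t_0}\}$, and now $T'$ is manifestly not a finite combination of the remaining $S_t$'s since its restriction to $M$ has range not contained in any finite-dimensional span of the $v_t$ (as $T$ was an arbitrary operator, but if it were we handle it as the degenerate case separately). I expect this independence bookkeeping to be the only real obstacle; the boundedness estimates and the annihilation argument are routine.
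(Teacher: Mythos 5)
Your proposal is correct, but it takes a genuinely different route from the paper's. You exploit linearity: non-injectivity of $T$ means $\ker T\neq\{0\}$, you fix $0\neq x_0\in\ker T$, and you adjoin to $T$ the $\mathfrak{c}$-many rank-one operators $S_t=\varphi\otimes v_t$ (with $\varphi(x_0)=0$ and $v_t=(t^n)_{n=1}^\infty$ linearly independent), so that every member of $W=\operatorname{span}(\{T\}\cup\{S_t\})$ annihilates $x_0$ and is therefore non-injective. Two remarks on your write-up: the linear independence of $T$ from the $S_t$'s, which you single out as the main obstacle, is actually irrelevant --- $W$ contains the $\mathfrak{c}$-dimensional $\operatorname{span}\{S_t: t\in(0,1)\}$ and is spanned by $\mathfrak{c}$ vectors, hence has dimension exactly $\mathfrak{c}$, contains $T$, and consists of operators killing $x_0$, which is all that is needed; and the ``change of basis'' sending $x_0$ to $e_1$ (which would require knowing that hyperplanes of $\ell_p$ are isomorphic to $\ell_p$) can be avoided entirely by just taking any nonzero $\varphi\in\ell_p^*$ vanishing at $x_0$. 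The paper's argument, by contrast, never uses kernels: given non-injective $f$ it fixes two points $w_0\neq t_0$ with $f(w_0)=f(t_0)$ and composes $f$ with ``spreading'' operators $u_k$ on the target sequence space that relocate coordinates into pairwise disjoint infinite subsets of $\mathbb{N}$; the maps $f_k=u_k\circ f$ still identify $w_0$ and $t_0$, and a summation map $\psi\colon\ell_q\to A$ yields a closed $\mathfrak{c}$-dimensional subspace containing $f$ inside the non-injective maps. What your approach buys is brevity in the linear case (and, since evaluation at $x_0$ is continuous in the operator norm, it would even give spaceability there); what the paper's approach buys is independence from linearity and from the structure of the domain, which is exactly what allows its generalization to operator and polynomial ideals, homogeneous polynomials and Lipschitz maps, with pointwise spaceability in that generality.
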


We shall generalize the result above in the following directions: (i) The space of bounded linear operators shall be replaced by much more general function spaces, going far enough to include spaces of different types of nonlinear operators. (ii) The domain space $\ell_p$ shall be replaced by an arbitrary set, which is not required even to be a linear space. (iii) The target space $\ell_q$ shall be replaced by much more general sequence spaces, which can be quasi-Banach spaces of vector-valued sequences. (iv) In such a quite general setting, we prove pointwise spaceability instead of pointwise $\mathfrak{c}$-lineability. 

In the linear setting, our main result generalizes Theorem \ref{6} to the range $0 < p,q < \infty$ and to the much smaller sets of non-injective compact operators and non-injective nuclear operators. In the nonlinear setting, we provide applications of our main result to sets of non-injective homogeneous polynomials, to sets of  non-injective bounded Lipschitz functions and to sets of non-injective Lipschitz functions that fix a distinguished point. 

By $B_E$ we denote the closed unit ball of the Banach space $E$, and by $E^*$ we denote its topological dual. For Banach space theory we refer to \cite{meg}

\section{Spreading sequence spaces and stable function spaces}

In this section we introduce the spaces we shall use to generalize Theorem \ref{6}. The abstract definitions are followed by a number of concrete examples.  For the theory of quasi-Banach spaces and $p$-Banach spaces, $0 < p \leq 1$, we refer to \cite{Kalton1, Kalton, Stiles}. Unless explicitly stated otherwise, all linear spaces are over $\mathbb{K} = \mathbb{R}$ or $\mathbb{C}$. Henceforth, all linear spaces and subspaces, including Banach and quasi-Banach spaces, are supposed to be non-null, that is, different from $\{0\}$.

\begin{definition}\rm Let $0 < q \leq 1$ and let $W $ be a linear space. A {\it $W$-spreading $q$-space} is a linear subspace $V $ of the space $W^\mathbb{N}$ of $W$-valued sequences with the coordinatewise operations, endowed with a complete $q$-norm $\|\cdot\|_V$ such that the following condition holds:

$\bullet$ If $(a_j)_{j=1}^\infty \in V$ and $\mathbb{N}_0=\{j_1<j_2< j_3< \cdots \}$ is an increasing infinite subset of $\mathbb{N}$, then the sequence $(b_j)_{j=1}^\infty$ given by
$$b_k =\left\{
\begin{array}{cl}
a_i, & \textrm{if} ~ k=j_i,\\
0, & \textrm{otherwise},
\end{array}
\right.$$
belongs to $V$ and  $ \|(b_j)_{j=1}^\infty\|_V \leq \|(a_j)_{j=1}^\infty\|_V.$
In this case we say that the sequence $(b_j)_{j=1}^\infty$ is the {\it spreading} of $(a_j)_{j=1}^\infty$ with respect $\mathbb{N}_0$, and we write $(b_j)_{j=1}^\infty = {\rm Sp}((a_j)_{j=1}^\infty; \mathbb{N}_0)$.

In the normed case $q = 1$ we write {\it $W$-spreading space} instead of $W$-spreading $1$-space.
\end{definition}

\begin{example}\label{6ymz}\rm Most usual scalar-valued or vector-valued sequence spaces are spreading spaces. Let $E$ be a Banach space.\\
 (i) The following are $E$-spreading spaces with respect to the supremum norm $\|\cdot\|_\infty$: the space $c_0(E)$ of norm null sequences, the space $\ell_\infty(E)$ of bounded sequences and the space $c_0^w(E)$ of weakly null sequences. \\
(ii) If $1 \leq p < \infty$, then the space $\ell_p(E)$ of absolutely $p$-summable sequences is an $E$-spreading space with respect to its usual norm $\|\cdot\|_p$. If $0 < p < 1$, then it is a $E$-spreading $p$-space.\\
(iii) If $1 \leq p < \infty$, then the space $\ell_p^w(E)$ of weakly $p$-summable sequences endowed with its usual norm $\|(x_n)_{n=1}^\infty\|_{w,p} = \sup\limits_{x^* \in B_{E^*}}\|(x^*(x_n))_{n=1}^\infty \|_p$,  and its closed subspace
$$\ell_p^u(E)= \left\{(x_n)_{n=1}^\infty \in \ell_p^w(E) : \lim_{n \to \infty}\|(x_j)_{j=n}^\infty\|_{w,p} = 0\right\},$$
of unconditionally $p$-summable sequences (see \cite[8.2]{df}), are $E$-spreading spaces. If $0 < p < 1$, then they are $E$-spreading $p$-spaces.

As a counterexample, it is clear that the Banach space $c$ of scalar-valued convergent sequences with the supremum norm is not a $\mathbb{K}$-spreading space.
\end{example}

We shall use a couple of times that every Banach space is a $q$-Banach space for any $0 < q < \infty$. Actually, from \cite[Proposition 5.5.2]{garling} it follows that, if $0 < q < p \leq 1$, then every $p$-norm is a $q$-norm, hence every $p$-Banach space is a $q$-Banach space.

\begin{definition}\label{2xum}\rm Let  $\Omega$ be an arbitrary nonempty set, let $F$ be a $q$-Banach space, $0 < q \leq 1$, and, as usual, let $F^\Omega$ denote the linear space of maps from $\Omega$ to $F$ with the pointwise operations. A linear subspace $A \neq \{0\}$ of $F^\Omega$, endowed with a complete $q$-norm $\|\cdot\|_A$, is said to be an {\it $(\Omega, F)$-stable function space} if the following conditions hold:\\
{\rm (i)} The (metrizable) topology on $A$ generated by the $q$-norm $\|\cdot\|_A$ contains the topology of pointwise convergence.\\
{\rm (ii)} If $g \in A$ and $u \colon F \longrightarrow F$ is a bounded linear operator, then $u \circ g \in A$ and $$\|u \circ g\|_A \leq \|u\|\cdot\|g\|_A.$$
\end{definition}

\begin{example}\label{ikm4}\rm (a) Let $0 < p,q \leq 1$, let $E$ be a $p$-Banach space and let $F$ be a $q$-Banach space. It is immediate that the space $\mathcal{L}(E;F)$ of bounded linear operators from $E$ to $F$, endowed with the $q$-norm given by $\|T\|= \sup\limits_{\|x\|\leq 1} \|T(x)\|$, is an $(E,F)$-stable function space.\\
(b) For the theory of quasi-Banach operator ideals, we refer the reader to \cite[Section 9]{fg} and \cite[Part 2]{pietsch}. Let $({\cal I}, \|\cdot\|_{\cal I})$ be a $q$-Banach operator ideal, where $0 < q \leq 1$. Since Banach spaces are $q$-Banach spaces, ${\cal I}(E;F)$ is an $(E,F)$-stable function space for all Banach spaces $E$ and $F$. Denoting by $\|\cdot\|$ the usual norm on ${\cal L}(E;F)$, the inequality $\|\cdot\| \leq \|\cdot\|_{\cal I}$ \cite[Proposition 6.1.4]{pietsch} gives condition (i) of the definition. Condition (ii) follows from the ideal inequality of $\|\cdot\|_{\cal I}$ (just take $m=1$ in (\ref{lm9q})).
\end{example}

 For the reader's convenience, we recall now the notion of quasi-Banach ideal of homogeneous polynomials (see \cite{ewerton, fg}). By ${\cal P}(^mE;F)$ we denote the Banach space of continuous $m$-homogeneous polynomials $P$ from the Banach space $E$ to the Banach space $F$ with the norm $\|P\| = \sup\limits_{x \in B_E}\|P(x)\|$. Given $x^* \in E^*$ and $b \in F$, by $(x^*)^m \otimes b$ we denote the $m$-homogeneous polynomial defined by $[(x^*)^m \otimes b](x) = x^*(x)^mb$.   For the basic theory of spaces of homogeneous polynomials between Banach spaces, see \cite{dineen, mujica}. For $0 < q \leq 1$, a {\it $q$-Banach ideal of homogeneous polynomials}, or simply, a {\it $q$-Banach polynomial ideal} is a subclass $\mathcal{Q}$ of the class ${\cal P}$ of all continuous homogeneous polynomials between Banach spaces, endowed with a function $\|\cdot\|_{\cal Q} \colon {\cal Q} \longrightarrow \mathbb{R}$, such that, for every $m \in \mathbb{N}$ and all Banach spaces $E$ and $F$, the component
$$\mathcal{Q}(^mE;F):=\mathcal{P}(^mE;F)\cap \mathcal{Q}$$
satisfies the following conditions:\\ 
$\bullet$ $\mathcal{Q}(^mE;F)$ is a linear subspace of $\mathcal{P}(^mE;F)$ containing the polynomials of the type $(x^*)^m\otimes b$, $x^*\in E^*$ and $b \in F$.\\
%
$\bullet$ The restriction of $\|\cdot\|_\mathcal{Q}$ to $\mathcal{Q}(^mE;F)$ is  a $q$-norm and
$$\|I_m \colon \mathbb{K} \longrightarrow \mathbb{K}\,,\, I_m(\lambda) = \lambda^m\|_{\cal Q} = 1. $$
$\bullet$ If $u \in \mathcal{L}(F_0;F)$, $P \in \mathcal{Q}(^mE_0;F_0)$ and $v \in \mathcal{L}(E;E_0)$, then $u \circ P \circ v \in \mathcal{Q}(^mE;F)$ and
\begin{equation} \label{lm9q}\|u \circ P \circ v\|_\mathcal{Q} \leq \|u\|\cdot \|P\|_\mathcal{Q} \cdot \|v\|^m.\end{equation}

Plenty of examples, concerning polynomials that are approximable, compact, weakly compact, of absolutely summing-type and of nuclear-type, can be found in \cite{ewerton, klausminimal, fg}. Techniques to generate polynomial ideals from a given operator ideal, which also provide a number of examples, can be found, e.g., in \cite{aronrueda, note, prims}.

\begin{example}\label{is9t}\rm Let $({\cal Q}, \|\cdot\|_{\cal Q})$ be a $q$-Banach polynomial ideal, $0 < q \leq 1$. For all $m \in \mathbb{N}$ and Banach spaces $E,F$, ${\cal Q}(^mE,F)$ is an $(E,F)$-stable function space (we are using again that Banach spaces are $q$-Banach spaces). Condition (i) of the definition follows from the inequality $\|\cdot\| \leq \|\cdot\|_{\cal Q}$, where $\|\cdot\|$ is the usual norm on ${\cal P}(^mE;F)$ (see \cite[Remark 2.2]{ewerton}). Condition (ii) follows from (\ref{lm9q}). 
\end{example}

All metric spaces in this paper are supposed to have at least two points.

\begin{example}\label{l0mz}\rm Let $(M,d)$ be a metric space and let $E$ be a Banach space. A map $f \colon M \longrightarrow E$ is a {\it Lipschitz function} if
$$\|f\|_{d}: = \sup\left\{\frac{\|f(x) - f(y)\|}{d(x,y)} : x,y \in M, x\neq y \right\}< \infty. $$
According to \cite{johnson}, the set ${\rm Lip}(M,E)$ of bounded Lipschitz functions from $M$ to $E$ is a Banach space with the norm $\|f\|_{\rm Lip} := \max\{\|f\|_{d}, \|f\|_{\infty}\}$. The inequality $\|f\|_{\infty} \leq \|f\|_{\rm Lip}$ gives condition (i) of Definition \ref{2xum}. Given $f \in {\rm Lip}(M,E)$ and $u \in {\cal L}(E,E)$, it is easy to check that $\|u \circ f  \|_{d}\leq \|u\|\!\cdot \!\|f\|_{d}$ and  $\|u \circ f  \|_\infty\leq \|u\|\!\cdot \!\|f\|_\infty$. Hence $u \circ f \in {\rm Lip}(M,E)$ and $\|u \circ f  \|_{\rm Lip}\leq \|u\|\!\cdot \!\|f\|_{\rm Lip}$, which gives condition (ii). Therefore, ${\rm Lip}(M,E)$ is an $(M,E)$-stable function space.
\end{example}

\begin{example}\label{km6u}\rm By a pointed metric space we mean a metric space $M$ in which a point $0 \in M$ has been distinguished. The space ${\rm Lip}_0(M,E)$ of all (possibly unbounded) Lipschitz functions $f$ from a pointed metric space $M$ to a Banach space $E$ such that $f(0) = 0$ is a Banach space endowed with the norm $\|\cdot\|_d$ (see, e.g., \cite{brudnyi}). For all $x \in M$ and $f \in {\rm Lip}_0(M,E)$, $\|f(x)\| \leq \|f\|_d\cdot d(x,0)$, which gives condition (i) of Definition \ref{2xum}. Condition (ii) follows as in the example above. Therefore, ${\rm Lip}_0(M,E)$ is an $(M,E)$-stable function space.
\end{example}


\section{Main result}
A couple of preparatory results are needed to prove our main result.

\begin{lemma} Let $W $ be a linear space and $0 < q \leq 1$. Any $W$-spreading $q$-space is at least $\mathfrak{c}$-dimensional.
\end{lemma}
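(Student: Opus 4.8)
The plan is to fix any $0\neq a=(a_j)_{j=1}^\infty\in V$ (which exists since $V\neq\{0\}$) and to exploit the spreading axiom together with the completeness of $\|\cdot\|_V$ to produce $\mathfrak{c}$ linearly independent vectors of $V$. Throughout I use that, since the elements of $V$ are genuine sequences with the coordinatewise operations and $\|\cdot\|_V$ is a genuine norm, an equality of vectors in $V$ is an equality of sequences, coordinate by coordinate.

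I would first observe that $V$ is infinite dimensional. For $k\geq 1$ set $a^{(k)}:={\rm Sp}\bigl(a;\{k,k+1,k+2,\dots\}\bigr)$, the shift of $a$ to the right by $k-1$ places; by the spreading axiom $a^{(k)}\in V$ and $\|a^{(k)}\|_V\leq\|a\|_V=:C$. Let $m_0$ be the least index with $a_{m_0}\neq 0$. Then $a^{(k)}$ vanishes on coordinates $1,\dots,m_0+k-2$ and has the entry $a_{m_0}$ in coordinate $m_0+k-1$, so from a finite relation $\sum_{k=1}^{n}c_k a^{(k)}=0$ one reads off, at coordinates $m_0,m_0+1,\dots,m_0+n-1$ successively, that $c_1=c_2=\dots=c_n=0$. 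Hence $\{a^{(k)}:k\geq 1\}$ is linearly independent. (Note this works whether the support of $a$ is finite or infinite.)

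To pass from countable dimension to $\mathfrak{c}$ I would use completeness. For each $t\in(0,1)$ the partial sums of $\sum_{k\geq 1}t^{k}a^{(k)}$ are $q$-Cauchy, since $\sum_{k\geq 1}\|t^{k}a^{(k)}\|_V^{q}\leq C^{q}\sum_{k\geq 1}t^{kq}<\infty$; let $x_t\in V$ denote the sum. Given distinct $t_1,\dots,t_m\in(0,1)$ and a relation $\sum_{i=1}^{m}c_i x_{t_i}=0$, swapping the finite sum with the convergent series gives $\sum_{k\geq 1}p_k a^{(k)}=0$ with $p_k:=\sum_{i=1}^{m}c_i t_i^{\,k}$ (and $(p_k)_k\in\ell_q$ since $\max_i t_i<1$, so the series does converge in $V$). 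Reading this identity coordinate by coordinate and using the staggered supports of the $a^{(k)}$ exactly as in the previous paragraph forces $p_1=p_2=\dots=0$; since the vectors $\bigl(t_i^{\,k}\bigr)_{k\geq 1}$, $i=1,\dots,m$, are linearly independent (a Vandermonde determinant), all $c_i=0$. Thus $\{x_t:t\in(0,1)\}$ is a linearly independent subset of $V$, and $\dim V\geq\mathfrak{c}$.

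The step I expect to require the most care is the one where the norm identity $\sum_k p_k a^{(k)}=0$ is read coordinatewise — that is, the fact that convergence in $(V,\|\cdot\|_V)$ entails coordinatewise convergence. In each of the concrete spreading spaces of Section~2 this is immediate, as $\|\cdot\|_V$ dominates every coordinate. In the abstract setting one can circumvent the issue altogether: the first paragraph already shows that $(V,\|\cdot\|_V)$ is an infinite dimensional $q$-Banach space, and every infinite dimensional $q$-Banach space has Hamel dimension at least $\mathfrak{c}$ (the classical Banach space fact, which persists for $q$-Banach spaces by extracting a basic sequence, on whose closed linear span the coordinate functionals are continuous, and rerunning the Vandermonde argument above).
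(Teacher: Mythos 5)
Your first step is fine and is essentially the paper's own argument for infinite dimensionality: you spread the mother vector along the tails $\{k,k+1,\dots\}$, while the paper spreads it along countably many pairwise disjoint infinite subsets of $\mathbb{N}$; in both cases only \emph{finite} linear combinations occur, so the coordinates of the combination can be computed exactly, with no topology involved, and the staggered (resp. disjoint) supports give linear independence.

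The genuine gap is exactly the one you flagged in the direct construction of the family $\{x_t\}_{t\in(0,1)}$, and it is not a removable technicality: nothing in the definition of a $W$-spreading $q$-space relates $\|\cdot\|_V$ to the coordinates. The space $W$ carries no topology at all, so the coordinate maps $\pi_j\colon V\to W$ have no continuity one could invoke, and from the norm identity $\sum_k p_k a^{(k)}=0$ (a statement about convergence of partial sums in $\|\cdot\|_V$) you cannot read off the coordinates of the limit; the coordinates of the partial sums do stabilize, but there is no licence to identify the stabilized values with the coordinates of the norm limit. Hence the Vandermonde argument cannot be run inside $V$ in this abstract setting. Your fallback --- $V$ is an infinite dimensional $q$-Banach space, hence at least $\mathfrak{c}$-dimensional --- is precisely the paper's route, but your parenthetical justification of that general fact by ``extracting a basic sequence'' is itself not sound at this level of generality: Mazur's construction uses functionals, the dual of a $q$-Banach space can be trivial (e.g. $L_q[0,1]$, $0<q<1$), and whether every infinite dimensional quasi-Banach space contains a basic sequence is a well-known open problem. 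The fact you need is true and should simply be quoted, as the paper does, via Baire category together with \cite[Proposition III.5]{Aron} and \cite[Theorem I-1]{mackey}. With that citation in place of your sketch, your argument is correct and coincides with the paper's proof.
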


\begin{proof} Let $V $ be a $W$-spreading $q$-space. Using Baire's category theorem, it is not difficult to check that every infinite dimensional $q$-Banach space is at least $\mathfrak{c}$-dimensional (see \cite[Proposition III.5]{Aron}, \cite[Theorem I-1]{mackey}). Therefore, it is enough to prove that $V$ is infinite dimensional. To do so, choose $ v = (v_j)_{j=1}^\infty \in V \backslash \{0\}$ and split $\mathbb{N}$ into countably many pairwise disjoint subsets   $(\mathbb{N}_k)_{k=1}^\infty$. 
For each $k \in \mathbb{N}$, we write $\mathbb{N}_k:=\{n_1^{(k)}< n_2^{(k)}< \cdots\}.$ Given $k_0 \in \mathbb{N}$, define $\pi_{k_0} \colon W^{\mathbb{N}} \longrightarrow W$ by $\pi_{k_0}((x_n)_{n=1}^\infty) = x_{k_0}$. Since
     $v \neq 0$, there is $j_0 \in \mathbb{N}$ so that  $v_{j_0}=\pi _{j_0}(v) \neq 0$. Using  that $V$ a $W$-spreading space, for every $k \in \mathbb{N}$, the sequence $w_k=(w_j^{(k)})_{j=1}^\infty$ given by
    $$w_j^{(k)}=\left\{
\begin{array}{cll}
v_i, & \textrm{if} & j=n_i^{(k)} \in \mathbb{N}_k  ,\\
0, &\textrm{if} & j \notin \mathbb{N}_k
\end{array}
\right.$$
belongs to $V$. We claim that the set $\{w_k : k \in \mathbb{N} \}$ is linearly independent. To prove the claim, let $\alpha_1, \dots , \alpha _m$ be given scalars, $m \in \mathbb{N}$, and suppose that
 \begin{equation}\label{8}
    \alpha _1 w_1 +\alpha _2 w_2 + \cdots + \alpha _m w_m= 0 \in V.
    \end{equation}
In particular,
    $$\alpha _1 \pi _{n_{j_0}^{(1)}}(w_1)+\alpha _2 \pi _{n_{j_0}^{(1)}}(w_2) + \cdots + \alpha _m \pi _{n_{j_0}^{(1)}}(w_m)=0 \in W.$$
Note that $\pi _{n_{j_0}^{(1)}}(w_1)=v_{j_0} \neq 0$ and, since $n_{j_0}^{(1)} \notin \bigcup\limits_{k=2}^\infty \mathbb{N}_k$, $\pi _{n_{j_0}^{(1)}}(w_j)=0$ for every $2\leq j \leq m$. 
Thus, $\alpha_1 v_{j_0}=0$, from which we get $\alpha_1 =0$. Therefore, (\ref{8}) collapses to
$$\alpha _2 w_2 + \cdots + \alpha _m w_m=0.$$
In particular,
\begin{equation} \label{lomw}\alpha _2 \pi _{n_{j_0}^{(2)}}(w_2) + \cdots + \alpha _m \pi _{n_{j_0}^{(2)}}(w_m)=0 \in W.\end{equation}
Using that $n_{j_0}^{(2)} \in \mathbb{N}_2$ and $\mathbb{N}_i \cap \mathbb{N}_j = \emptyset $ for all $i\neq j$, we have $\pi _{n_{j_0}^{(2)}}(w_2)=v_{j_0} \neq 0$ and $\pi _{n_{j_0}^{(2)}}(w_j)=0$ for every $3\leq j \leq m$. From (\ref{lomw}) we get $\alpha _2 v_{j_0}=0$, hence $\alpha _2 =0.$ Repeating the procedure finitely many times we conclude that $\alpha _1 =\cdots = \alpha _m =0$, which shows that $V$ is infinite dimensional and completes the proof. 
\end{proof}

We skip the easy proof of the following lemma.

\begin{lemma} If $V$ is an infinite dimensional linear space and $\Omega$ is a nonempty set, then $ V ^{\Omega}$ is infinite dimensional.
\end{lemma}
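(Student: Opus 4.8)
The plan is to exhibit an explicit linear injection of $V$ into $V^\Omega$, which immediately forces $\dim V^\Omega \geq \dim V = \infty$. Since $\Omega \neq \emptyset$, fix a point $\omega_0 \in \Omega$ and define $\Phi \colon V \longrightarrow V^\Omega$ by letting $\Phi(v)$ be the map that sends $\omega_0$ to $v$ and every other element of $\Omega$ to $0 \in V$. Because the operations on $V^\Omega$ are pointwise, $\Phi$ is clearly linear, and it is injective: if $\Phi(v) = 0$ in $V^\Omega$, then evaluating at $\omega_0$ gives $v = 0$. Hence $V^\Omega$ contains a linear copy of the infinite dimensional space $V$, so it is infinite dimensional.

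Equivalently, one can argue directly with a linearly independent sequence, avoiding any mention of embeddings: using that $V$ is infinite dimensional, pick $(v_n)_{n=1}^\infty$ linearly independent in $V$, set $f_n := \Phi(v_n) \in V^\Omega$, and observe that a relation $\sum_{n=1}^m \alpha_n f_n = 0$ in $V^\Omega$, evaluated at $\omega_0$, yields $\sum_{n=1}^m \alpha_n v_n = 0$ in $V$, whence $\alpha_1 = \cdots = \alpha_m = 0$. Thus $\{f_n : n \in \mathbb{N}\}$ is an infinite linearly independent subset of $V^\Omega$.

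There is essentially no obstacle here; the only place where the hypothesis $\Omega \neq \emptyset$ is used is in the choice of the base point $\omega_0$ needed to define $\Phi$ (or, in the second formulation, the functionals "evaluation at $\omega_0$").
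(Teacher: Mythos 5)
Your argument is correct: the map $\Phi$ sending $v$ to the function that takes the value $v$ at a fixed $\omega_0\in\Omega$ and $0$ elsewhere is a linear injection of $V$ into $V^\Omega$, so $V^\Omega$ inherits infinite dimensionality (the constant-function embedding $v\mapsto (w\mapsto v)$ would do just as well). The paper explicitly skips the proof of this lemma as easy, and what you wrote is exactly the kind of routine verification the authors had in mind, so there is nothing to compare and nothing missing.
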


%
%
%
%

\begin{theorem} \label{7} Let $W$ be a linear space, let $V$ be a $W$-spreading $q$-space, $0 < q \leq 1$, let $\Omega$ be a nonempty set and let $A$ be a $(\Omega,V)$-stable function space. Then the subset of $A$ of non-injective maps is either 
     $\{0\}$ or  pointwise spaceable.
\end{theorem}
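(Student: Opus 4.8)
The strategy is the classical mother-vector technique, but arranged so the mother vector lands inside the constructed subspace. Suppose the set of non-injective maps in $A$ is not $\{0\}$; fix a non-injective $0 \neq f \in A$. Being non-injective and valued in the sequence space $V \subseteq W^{\mathbb{N}}$, there is $\omega_0 \neq \omega_1$ in $\Omega$ with $f(\omega_0) = f(\omega_1)$. The plan is to build, for this fixed $f$, a closed infinite dimensional subspace $Z \subseteq A$ with $f \in Z$ and every $0 \neq g \in Z$ non-injective. Since all the maps we produce will share the property $g(\omega_0) = g(\omega_1)$, non-injectivity will be automatic for the whole subspace.

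First I would produce the auxiliary operators. Split $\mathbb{N}$ into countably many pairwise disjoint infinite sets $(\mathbb{N}_k)_{k=1}^\infty$, write $\mathbb{N}_0^{(k)}$ for the $k$-th one listed increasingly, and let $S_k \colon V \longrightarrow V$ be the spreading operator $S_k(a) = \mathrm{Sp}(a; \mathbb{N}_0^{(k)})$; by the defining inequality of a $W$-spreading $q$-space, each $S_k$ is a well-defined linear map with $\|S_k(a)\|_V \leq \|a\|_V$, hence (it is linear because the spreading operation is coordinatewise) $S_k \in \mathcal{L}(V;V)$ with $\|S_k\| \leq 1$. Now set $f_k := S_k \circ f$. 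Condition (ii) of the $(\Omega,V)$-stable function space definition gives $f_k \in A$ with $\|f_k\|_A \leq \|f\|_A$; and since $f_k(\omega) = S_k(f(\omega))$ for each $\omega$, we still have $f_k(\omega_0) = S_k(f(\omega_0)) = S_k(f(\omega_1)) = f_k(\omega_1)$, so each $f_k$ is non-injective. Moreover, arguing exactly as in Lemma 3.1 (evaluate an alleged finite linear relation $\sum \alpha_k f_k = 0$ at a point $\omega$ where $f(\omega) \neq 0$, then at the coordinates $n_{j_0}^{(k)}$ where the $k$-th spread has its $j_0$-th entry $\neq 0$ and all others vanish), the family $\{f_k : k \in \mathbb{N}\}$ is linearly independent.

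Next I would assemble the subspace. Consider $Z := \overline{\mathrm{span}}\{f\} \cup \overline{\mathrm{span}}\{f_k : k \in \mathbb{N}\}$ — more precisely let $Z$ be the closure in $A$ of $\mathrm{span}(\{f\} \cup \{f_k : k \geq 1\})$. It is a closed subspace of $A$, it contains $f$, and it is infinite dimensional because it contains the linearly independent family $\{f_k\}$. The remaining point — the only delicate one — is that every nonzero $g \in Z$ is non-injective, i.e. $g(\omega_0) = g(\omega_1)$ for all $g \in Z$. For $g$ in the dense span this is clear from linearity, since each of $f, f_1, f_2, \dots$ satisfies it. To pass to the closure, use condition (i) of Definition 2.3: the $q$-norm topology on $A$ is finer than pointwise convergence, so the two evaluation maps $g \mapsto g(\omega_0)$ and $g \mapsto g(\omega_1)$ are continuous from $(A, \|\cdot\|_A)$ to $V$; hence the set $\{g \in A : g(\omega_0) = g(\omega_1)\}$ is closed in $A$, and since it contains the dense span it contains all of $Z$. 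Thus $Z \setminus \{0\}$ consists of non-injective maps, $f \in Z$, and $Z$ is closed and infinite dimensional, which is exactly pointwise spaceability.

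**Main obstacle.** There is no serious obstacle; the one place requiring care is confirming that closure does not destroy non-injectivity, which is precisely what hypothesis (i) of the stable function space definition (topology finer than pointwise convergence) is designed to guarantee. A secondary routine point is checking linear independence of $\{f_k\}$, which transcribes verbatim the computation already carried out in Lemma 3.1 once one observes that the coordinate functionals $\pi_n$ on $V$ compose with the stable-space structure in the expected way; I would simply refer back to that lemma rather than repeat the argument.
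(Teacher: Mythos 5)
Your construction is the same as the paper's: the spreading operators $S_k$ are exactly the operators $u_k$ of the paper's proof, the maps $f_k=u_k\circ f$ are the same, each $f_k$ inherits $f_k(\omega_0)=f_k(\omega_1)$, linear independence is checked by the same coordinate evaluations, and your closed subspace $Z=\overline{{\rm span}\{f,f_1,f_2,\dots\}}$ coincides with the paper's $\overline{\psi(\ell_q)}$. Your passage to the closure (evaluations are continuous by condition (i), so $\{g: g(\omega_0)=g(\omega_1)\}$ is closed) is in fact a slightly cleaner packaging of the paper's sequence argument.

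The one point you leave open is the dimension count: in this paper ``pointwise spaceable'' means pointwise $\mathfrak{c}$-spaceable, and you only establish that $Z$ is closed and infinite dimensional, concluding ``which is exactly pointwise spaceability'' -- it is not, as stated. The paper closes this by defining $\psi\colon\ell_q\to A$, $\psi((a_k))=a_1f+\sum_{j\ge 2}a_jf_{j-1}$, checking via $q$-absolute convergence (using the estimate $\|f_k\|_A\le\|f\|_A$ and completeness of $A$) that $\psi$ is well defined, and via the same coordinate argument that it is injective, so that $\psi(\ell_q)\subseteq Z$ is already $\mathfrak{c}$-dimensional; note this is where the choice $j_0\notin\bigcup_k\mathbb{N}_k$, which you do not impose, is used to isolate the coefficient of $f$. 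Alternatively you can repair your version in one line: $Z$ is a closed subspace of the complete $q$-normed space $A$, hence itself an infinite dimensional $q$-Banach space, and by the Baire category fact invoked in the paper's Lemma 3.1 it has dimension at least $\mathfrak{c}$. Either way the gap is minor and the argument is otherwise the paper's own.
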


\begin{proof} Suppose that $\mathcal{N}: = \{f \in A: f \textrm{ is non-injective}\}\neq \{0\}.$ Pick $0 \neq f \in \mathcal{N} $ and let  ${\rm span}\{f\} $ be the 1-dimensional subspace generated by $f$. It is clear that ${\rm span}\{f\}  \subseteq {\cal N}$ (this is exactly the reason why pointwise spaceability and $(1, \mathfrak{c})$-spaceability are equivalent for $\cal N$). 
Being $f$ non-injective, there are 
 \begin{equation}\label{1}
 w_0, t_0 \in \Omega,~ w_0\neq t_0, \mbox{ such that }   f(w_0)=f(t_0).
 \end{equation}
For every $w \in \Omega$, we denote by $(f(w))_j$ the $j$-th coordinate of $f(w)$, that is, $f(w) = ((f(w))_j)_{j=1}^\infty$. As $f\neq 0$, there exists $z \in \Omega$ such that $f(z)\neq 0$, hence there is $j_0 \in \mathbb{N}$ such that $(f(z))_{j_0}\neq 0$.
Let $(\mathbb{N}_k)_{k=1}^\infty$ be a sequence of pairwise disjoint infinite subsets of $\mathbb{N}$ not containing $j_0$, that is, $j_0 \notin \bigcup\limits _{k=1}^\infty \mathbb{N}_k$. For each $k \in \mathbb{N}$ we write $\mathbb{N}_k=\{n_1^{(k)}<n_2^{(k)}<\cdots \}$ and consider the map
 $$u_k \colon V \longrightarrow V~,~(u_k(x))_j=\left\{
\begin{array}{cll}
x_i, &\textrm{if} & j=n_i^{(k)} \in \mathbb{N}_k,\\
0, & \textrm{if} &  j \notin \mathbb{N}_k,
\end{array}
\right.$$
where $x = (x_n)_{n=1}^\infty$. The map $u_k$ is well defined, in the sense that it is $V$-valued, because $V$ is a $W$-spreading $q$-space.  
Given $x=(x_n)_{n=1}^\infty, y=(y_n)_{n=1}^\infty \in V$ and $\lambda \in \mathbb{K}$, we have $x+\lambda y = (x_n+\lambda y_n)_{n=1}^\infty$. If $j=n_i^{(k)} \in \mathbb{N}_k$, then
$$(u_k(x+\lambda y))_j= x_i+\lambda y_i =(u_k(x))_j+\lambda (u_k(y))_j.$$
And if $j \notin \bigcup\limits _{k=1}^\infty \mathbb{N}_k$, then the equality above holds with $0 = 0$. This proves that $u_k$ is linear.
For each $k \in \mathbb{N}$, define $f_k:=u_k\circ f \colon  \Omega \longrightarrow V$. We claim that $f_k \in A$ for every $k \in \mathbb{N}$. Since $A$ is a $(\Omega, V)$-stable fucntion space, it is enough to show that the linear operator $u_k$ is bounded. This is true because, since $u_k(x)= {\rm Sp}(x,\mathbb{N}_k)$ for every $x \in V$, we have
$$\|u_k(x)\|_V=\|{\rm Sp}(x,\mathbb{N}_k)\|_V\leq \|x\|_V,$$
which gives that $u_k$ is bounded with $\|u_k\|\leq 1$. 
Therefore, $f_k=u_k \circ f \in A$  and
\begin{equation}\label{9}
\|f_k\|_A=\|u_k \circ f\|_A \leq \|u_k\|\cdot\|f\|_A\leq \|f\|_A
\end{equation}
for every $k \in \mathbb{N}$. Note that, for every $w \in \Omega$: (i)  If $j\notin \mathbb{N}_k$, then  $(f_k(w))_j=(u_k(f(w)))_j=0.$ (ii) If $ j= n_m^{(k)}$ for some $ m \in \mathbb{N}$, then $(f_k(w))_j=(u_k(f(w)))_j=(f(w))_m .$ In other words, $f_k(w)$ can be written in the form
\begin{equation}\label{ghn4}(f_k(w))_j=\left\{
\begin{array}{ccl}
(f(w))_m, &\textrm{if} &  j=n_m^{(k)} \in \mathbb{N}_k,\\
0, & \textrm{if}&  j \notin \mathbb{N}_k.
\end{array}
\right.
\end{equation}
From $f(w_0)=f(t_0)$ if follows now immediately that
\begin{equation}\label{kv4f}{f_k(w_0)=f_k(t_0) \mbox{~for every~} k \in \mathbb{N}.}\end{equation} Therefore $f_k$ is non-injective, that is, $f_k \in \mathcal{N}$ para every $k \in \mathbb{N}.$ Our next purpose is to show that the set $\{f, f_k: k \in \mathbb{N}\}$ is linearly independent. To do so, let $k \in \mathbb{N}$ and let $a,a_1, \dots, a_k \in \mathbb{K}$ be such that
\begin{equation} \label{2}
af+a_1f_1+\cdots +a_kf_k=0.
\end{equation}
Evaluating at the element $z \in \Omega$ for which $(f(z))_{j_0}\neq 0$, we have
$$af(z)+a_1f_1(z)+\cdots + a_kf_k(z)=0.$$
In particular,
\begin{equation} \label{3}
a(f(z))_{j_0}+a_1(f_1(z))_{j_0}+\cdots +a_k(f_k(z))_{j_0}=0.
\end{equation}
Since $j_0 \notin \bigcup\limits_{k=1}^\infty \mathbb{N}_k $, we have $(f_1(z))_{j_0}=\cdots =(f_k(z))_{j_0}=0$, hence $(\ref{3})$ gives $a(f(z))_{j_0}=0$, from which we conclude that $a=0$. Therefore, (\ref{2}) collapses to $a_1f_1+\cdots +a_kf_k=0,$ which implies, in particular, that
\begin{equation} a_1(f_1(z))_{n_{j_0}^{(1)}}+\cdots +a_k(f_k(z))_{n_{j_0}^{(1)}}=0. \label{uy2c} \end{equation}
 Since $\mathbb{N}_1=\{n_1^{(1)}, n_2^{(1)}, \dots n_{j_0}^{(1)}, \dots\}$ and, for every $w \in \Omega$, $(f_1(w))_j=(f(w))_i$ whenever $j=n_i^{(1)} \in \mathbb{N}_1$, we have
$$(f_1(z))_{n_{j_0}^{(1)}}=(f(z))_{j_0}\neq 0.$$
Noting that $(f_k(z))_{n_{j_0}^{(1)}}=0$ for every $k>1$ because $n_{j_0}^{(1)} \notin \bigcup\limits_{k=2}^\infty \mathbb{N}_k$, (\ref{uy2c}) collapses to $a_1(f_1(z))_{n_{j_0}^{(1)}}=0$, which implies that $a_1=0$. Hence, $a_2f_2 + \ldots + a_k f_k = 0$. Repeating the latter argument, we conclude that $a = a_1=\dots = a_k=0.$ Thus far we have proved that $\{f, f_k: k \in \mathbb{N}\}$ is an infinite linearly independent subset of $A$ contained in $\cal N$. In particular, $A$ is infinite dimensional. Let us check that the map
$$\psi \colon \ell_q \longrightarrow A~,~\psi ((a_k)_{k=1}^\infty)=a_1f+\sum _{j=2}^\infty a_jf_{j-1},$$
is well defined. For every $n \in \mathbb{N}$,
\begin{align*}
\|a_1f\|_A^q+ \displaystyle \sum_{j=2}^n \|a_jf_{j-1}\|_A^q & = |a_1|^q\cdot \|f\|_A^q+\displaystyle \sum_{j=2}^n |a_j|^q \cdot\|f_{j-1}\|_A^q \\ &\stackrel{(\ref{9})}{\leq}  |a_1|^q \cdot \|f\|_A^q+\displaystyle \sum _{j=2}^n |a_j|^q\cdot \|f\|^q_A \\
&= \|f\|_A^q \cdot \displaystyle \sum_{j=1}^n |a_j|^q\leq \|f\|_A ^q \cdot \displaystyle \sum _{j=1}^\infty |a_j|^q < \infty.
\end{align*}
Letting $n \rightarrow \infty$ we get $\|a_1f\|_A^q+ \sum\limits_{j=2}^\infty \|a_jf_{j-1}\|_A^q < \infty$, that is, the series $a_1f+\sum\limits_{j=2}^\infty a_jf_{j-1}$ is $q$-absolutely convergent in the $q$-Banach space $A$. From \cite[Lemma 3.2.5]{Aron} it follows that the series converges in $A$, proving that $\psi$ is well defined. The linearity of $\psi$ follows easily. Let us prove now that $\psi$ is injective. 
Indeed, if $\psi ((a_k)_{k=1}^\infty)=0$, then
\begin{equation}\label{4}
a_1f+\sum _{j=2}^\infty a_jf_{j-1}=0.
\end{equation}
Applying at $z$ for which $(f(z))_{j_0}\neq 0$, we get
$$a_1(f(z))_{j_0}+ \sum _{j=2}^\infty a_j(f_{j-1}(z))_{j_0}=0. $$
Since $j_0 \notin \bigcup\limits_{k=1}^\infty \mathbb{N}_k$, we have $(f_{j-1}(z))_{j_0}=0$ for every $j \geq 2$. Hence,
$$\sum _{j=2}^\infty a_j(f_{j-1}(z))_{j_0}=0,$$
from which it follows that $a_1(f(z))_{j_0}=0$, that is, $a_1=0$. So, equality (\ref{4}) collapses to $\sum\limits _{j=2}^\infty a_jf_{j-1}=0.$ The same reasoning we made in (\ref{2}) allows us to conclude that  $a_j=0$ for every $j.$ This proves that $\psi$ is injective. We have that $\psi$ is a linear injective operator on the $\mathfrak{c}$-dimensional space $\ell_q$, therefore $\psi (\ell _q)$ is a $\mathfrak{c}$-dimensional subspace of $A$. Since $f = \psi((1,0,0, \ldots))$, we have $f \in \psi(\ell_q)$. For every $(a_k)_{k=1}^\infty \in \ell _q$ with $a_k \neq 0$ for some $k \in \mathbb{N}$ we have 
\begin{align*}\psi ((a_k)_{k=1}^\infty)(w_0) & =  a_1 f(w_0)+ a_2 f_1(w_0)+a_3 f_2(w_0)+\cdots \\
&\stackrel{\rm(\ref{1}),(\ref{kv4f})}{=}  a_1 f(t_0)+ a_2 f_1(t_0)+a_3 f_2(t_0)+\cdots \\
&= \psi ((a_k)_{k=1}^\infty)(t_0),
\end{align*}
showing that $\psi ((a_k)_{k=1}^\infty)$ is non-injective, that is, $\psi (\ell _q)$ is a $\mathfrak{c}$-dimensional subspace of $A$ contained in $\mathcal{N}$ and containing $f$. 
All that is left to prove is that $\overline{\psi (\ell _q)} \subset \mathcal{N}.$ Given $g \in \overline{\psi (\ell _q)}$, as the closure is taken in the metrizable topology of $A$, we can take a sequence $(g_k)_k$ in $\psi (\ell _q)$ such that $g_k \stackrel{k \to \infty}{\longrightarrow} g$ with respect to the $q$-norm of $A$. For every $k$ there is $(a_j^{(k)})_{j=1}^\infty \in \ell _q$ such that
$$
 g_k= \psi\left((a_j^{(k)})_{j=1}^\infty \right) =  a_1^{(k)}f+\sum_{j=2}^\infty a_j^{(k)}f_{j-1} .$$
For every $k \in \mathbb{N}$,
 \begin{equation}
g_k(w_0)  =  a_1^{(k)}f(w_0)+\sum_{j=2}^\infty a_j^{(k)}f_{j-1}(w_0) \stackrel{\rm (\ref{1}),(\ref{kv4f})}{=}  a_1^{(k)}f(t_0)+\sum_{j=2}^\infty a_j^{(k)}f_{j-1}(t_0)= g_k(t_0).\label{lay9}
\end{equation}
Since $A$ is a $(\Omega,V)$-stable space, its topology contains the topology of pointwise convergence, hence the convergence $g_k \longrightarrow g$ in $A$ implies that $g_k(w) \longrightarrow g(w)$ for every $w \in \Omega$. In particular,
$$g(w_0) = \lim_{k \to \infty} g_k(w_0)\stackrel{\rm (\ref{lay9})}{=}  \lim_{k \to \infty} g_k(t_0) = g(t_0).$$
This shows that $g$ is non-injective, that is, $g \in {\cal N}$. Thus, $\overline{\psi (\ell _q)}$ is a closed infinite dimensional subspace of $A$ contained in $\mathcal{N}$ containing $f$. The proof is complete. 
\end{proof}

\section{Applications}
In this section we use the examples provided in Section 2 to give applications of the main theorem which go far beyond the linear and normed scope of Theorem \ref{6}.

Although keeping the linear environment of Theorem \ref{6}, the first application  encompasses much more general domain and target spaces and, moreover, gives pointwise spaceability.

\begin{corollary}\label{pl7y} Let $0 < p,q \leq 1$, let $E$ be a $p$-Banach space such that  $\dim E >1$ and $E^* \neq \{0\}$, let $W$ be a linear space and let $V$ be $W$-spreading $q$-space. Then the set of non-injective bounded linear operators from $E$ to $V$ is pointwise spaceable in ${\cal L}(E,V)$. 
\end{corollary}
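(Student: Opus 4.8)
The plan is to derive Corollary \ref{pl7y} as a direct consequence of Theorem \ref{7}, so the only real work is to verify the hypotheses of the theorem and to check that the set in question is nonzero. First I would invoke Example \ref{ikm4}(a): since $E$ is a $p$-Banach space and $V$ is a $W$-spreading $q$-space (in particular a $q$-Banach space), the space $\mathcal{L}(E,V)$ of bounded linear operators, with the $q$-norm $\|T\| = \sup_{\|x\|\le 1}\|T(x)\|$, is an $(E,V)$-stable function space. Here $\Omega = E$ plays the role of the arbitrary nonempty set. Thus $V$ is a $W$-spreading $q$-space, $0 < q \le 1$, $\Omega = E$ is nonempty, and $A = \mathcal{L}(E,V)$ is an $(E,V)$-stable function space, so Theorem \ref{7} applies and tells us that the subset $\mathcal{N}$ of non-injective operators in $\mathcal{L}(E,V)$ is either $\{0\}$ or pointwise spaceable.

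The second step is to rule out the alternative $\mathcal{N} = \{0\}$, which is exactly where the extra hypotheses $\dim E > 1$ and $E^* \ne \{0\}$ come in. I would exhibit a nonzero non-injective bounded linear operator from $E$ to $V$ explicitly: pick $0 \ne \varphi \in E^*$ (possible since $E^* \ne \{0\}$) and, since $\dim E > 1$, note $\ker \varphi \ne \{0\}$, so $\varphi$ itself is non-injective as a map $E \to \mathbb{K}$. Then fix any $0 \ne v \in V$ (recall all our spaces are non-null) and set $T := \varphi(\cdot)\, v \colon E \to V$, i.e. $T(x) = \varphi(x) v$. This $T$ is linear, bounded with $\|T\| = \|\varphi\|\cdot\|v\|_V \ne 0$, so $T \ne 0$; and $\ker T = \ker \varphi \ne \{0\}$, so $T$ is non-injective. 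Hence $T \in \mathcal{N}\setminus\{0\}$, so $\mathcal{N} \ne \{0\}$, and Theorem \ref{7} yields that $\mathcal{N}$ is pointwise spaceable in $\mathcal{L}(E,V)$, which is the assertion.

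I do not anticipate a genuine obstacle here: the corollary is essentially a packaging of Theorem \ref{7} together with Example \ref{ikm4}(a). The only point requiring minimal care is the nonemptiness argument, where one must use both $\dim E > 1$ (to get a proper nonzero kernel, equivalently a non-injective functional) and $E^* \ne \{0\}$ (to have a functional at all to work with); without either hypothesis the conclusion can fail, since if $E = \mathbb{K}$ every nonzero operator into $V$ with trivial-or-full kernel behaviour can still be injective, and if $E^* = \{0\}$ one has no rank-one operators of this form. Everything else — that the $q$-norm makes $\mathcal{L}(E,V)$ complete, that its topology dominates pointwise convergence, and the composition inequality $\|u\circ g\| \le \|u\|\cdot\|g\|$ — is recorded in Example \ref{ikm4}(a) and may be cited verbatim.
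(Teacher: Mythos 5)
Your proposal is correct and follows exactly the paper's route: cite Example \ref{ikm4}(a) to see that ${\cal L}(E;V)$ is an $(E,V)$-stable function space, use $0 \neq x^* \in E^*$ and $0 \neq b \in V$ together with $\dim E > 1$ to produce the non-null non-injective rank-one operator $x^* \otimes b$, and conclude via Theorem \ref{7}. Nothing to add.
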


\begin{proof} By Example \ref{ikm4}(a) we know that ${\cal L}(E;V)$ is a $(E,V)$-stable function space. Taking $0 \neq x^* \in E^*$ and $0 \neq b \in V$, using that $\dim E >1$ it is not difficult to check that $x^* \otimes b$ is a non-null non-injective bounded linear operator. The result follows from Theorem \ref{7}.
\end{proof}

Given a Banach space $F$ and $0 < q < \infty$, we consider the set $V_{q,F} = \{c_0(F), c_0^w(F), \ell_\infty(F),$ $ \ell_q(F), \ell_q^w(F), \ell_q^u(F)\}$.

\begin{example}\rm Let $0 < p\leq 1$, let $E$ be a $p$-Banach space such that  $\dim E >1$ and $E^* \neq \{0\}$, and let $F$ be a Banach space. For any $0 < q < \infty$ and every $V \in V_{q,F}$, from Example \ref{6ymz} and the corollary above it follows that the set of non-injective bounded linear operators from $E$ to $V$ is pointwise spaceable.
\end{example}

The next application, although considering operators between Banach spaces, goes a bit further by assuring pointwise spaceability of sets much smaller than the set of non-injective bounded linear operators.

\begin{corollary}  \label{3.10} Let $E$ be a Banach space with ${\rm dim}E > 1$, let $W$ be a linear space, let $V$ be a $W$-spreading space and let $(\mathcal{I}, \|\cdot\|_{\cal I})$ be a $q$-Banach operator ideal, $0 < q \leq 1$. Then the subset of ${\cal I}(E;V)$ of non-injective bounded  linear operators is pointwise spaceable.
\end{corollary}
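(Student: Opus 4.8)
The plan is to deduce Corollary \ref{3.10} directly from Theorem \ref{7} by recognizing $\mathcal{I}(E;V)$ as a stable function space and by checking that its subset of non-injective operators is not reduced to $\{0\}$. First I would note that, since $V$ is a $W$-spreading space (the normed case $q = 1$), $V$ is in particular a Banach space, so Example \ref{ikm4}(b) applies and tells us that the component $\mathcal{I}(E;V)$ is an $(E,V)$-stable function space, where the arbitrary nonempty set $\Omega$ of Definition \ref{2xum} is taken to be the Banach space $E$ itself. Since $V$ is a $W$-spreading $1$-space, all the hypotheses of Theorem \ref{7} are then in place, and it only remains to rule out the degenerate alternative, that is, to show that $\mathcal{N} := \{T \in \mathcal{I}(E;V) : T \text{ is non-injective}\}$ is different from $\{0\}$.

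To this end I would exhibit an explicit non-null non-injective operator in $\mathcal{I}(E;V)$. Because $\dim E > 1$, Hahn--Banach provides $0 \neq x^* \in E^*$, and we also fix $0 \neq b \in V$ (possible since all spaces in the paper are non-null). Take the rank-one operator $x^* \otimes b \colon E \longrightarrow V$, $x \mapsto x^*(x)\, b$. It is non-null because $x^* \neq 0$ and $b \neq 0$, and it is non-injective because its kernel is $\ker x^*$, which has codimension $1$ in $E$ and is therefore nonzero as $\dim E > 1$. To see that $x^* \otimes b \in \mathcal{I}(E;V)$, I would write it as the composition $w \circ I_1 \circ x^*$, where $I_1 \colon \mathbb{K} \longrightarrow \mathbb{K}$ is the identity (which belongs to $\mathcal{I}$ with $\|I_1\|_{\mathcal{I}} = 1$ by the normalization in the definition of an operator ideal) and $w \colon \mathbb{K} \longrightarrow V$ is given by $w(\lambda) = \lambda b$; the ideal inequality for $\|\cdot\|_{\mathcal{I}}$ (the case $m = 1$ of (\ref{lm9q}), equivalently the linear ideal property) then yields $x^* \otimes b \in \mathcal{I}(E;V)$ with $\|x^* \otimes b\|_{\mathcal{I}} \leq \|w\| \cdot \|I_1\|_{\mathcal{I}} \cdot \|x^*\| < \infty$.

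Having produced $x^* \otimes b \in \mathcal{N} \setminus \{0\}$, we conclude $\mathcal{N} \neq \{0\}$, so the second alternative of Theorem \ref{7} holds and $\mathcal{N}$ is pointwise spaceable, which is exactly the assertion of the corollary. I do not expect any genuine obstacle here: the whole substance is carried by Theorem \ref{7} together with the identification of $\mathcal{I}(E;V)$ as an $(E,V)$-stable function space from Example \ref{ikm4}(b); the only point needing a moment's care is the standard fact that $q$-Banach operator ideals contain all rank-one operators, which is handled by the short normalization argument above, and the elementary observation that a rank-one operator on a space of dimension greater than $1$ is automatically non-injective.
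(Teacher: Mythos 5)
Your proposal is correct and follows essentially the same route as the paper: identify $\mathcal{I}(E;V)$ as an $(E,V)$-stable function space via Example \ref{ikm4}(b), exhibit the non-null non-injective rank-one operator $x^*\otimes b$ (the paper simply cites that finite-rank operators lie in any operator ideal, whereas you spell this out by factoring through $\mathbb{K}$ and the normalization $\|I_1\|_{\mathcal I}=1$), and then invoke Theorem \ref{7}. No gaps.
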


\begin{proof} By Example \ref{ikm4}(b) we know that $\mathcal{I}(E;V)$ is a $(E,V)$-stable function space. The non-null non-injective operator $x^* \otimes b$ of the proof of Corollary \ref{pl7y} has finite rank, hence it belongs to ${\cal I}(E;V)$. The result follows from Theorem \ref{7}.
\end{proof}

\begin{example}\rm Let $E$ and $F$ be Banach spaces and let $(\mathcal{I}, \|\cdot\|_{\cal I})$ be a $q$-Banach operator ideal, $0 < q \leq 1$. For any $1 \leq p \leq \infty$ and every $V \in V_{p,F}$, from Example  \ref{6ymz} and the corollary above it follows that the subset of ${\cal I}(E;V)$ of non-injective operators is pointwise spaceable. In particular, the sets of non-injective compact operators (with the usual operator norm) and of non-injective nuclear operators (with the nuclear norm) from $E$ to $V$, which are, in general, much smaller than the set of non-injective bounded operators, are pointwise spaceable.
\end{example}

Now we proceed to give applications in spaces of nonlinear operators. In the next result we consider only $m$-homogeneous polynomials with $m \geq 2$ odd in real Banach spaces. The reason is that, if $m$ is even or $\mathbb{K} = \mathbb{C}$, then $m$-homogeneous polynomials are never injective, therefore this case is not of interest.

\begin{corollary} Let $\mathbb{K} = \mathbb{R}$, let $E$ be a Banach space, let $W$  be a linear space, let be $V$ be a $W$-spreading space, and let $(\mathcal{Q}, \|\cdot\|_{\cal Q})$ be a $q$-Banach polynomial ideal, $0 < q \leq 1$. Then, for every $m \geq 2$ odd, the subset of $\mathcal{Q}(^mE; V)$ of non-injective polynomials is pointwise spaceable. 
\end{corollary}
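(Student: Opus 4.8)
The plan is to apply Theorem~\ref{7} directly, so the only real work is to exhibit a single non-null, non-injective polynomial in $\mathcal{Q}(^mE;V)$ and to check that $\mathcal{Q}(^mE;V)$ fits the hypotheses of that theorem. By Example~\ref{is9t}, for every $m \in \mathbb{N}$ and all Banach spaces $E$ and $V$ (recalling that $V$, being a $W$-spreading space, is in particular a Banach space), the component $\mathcal{Q}(^mE;V)$ is an $(E,V)$-stable function space. Since $V$ is a $W$-spreading space, it is a $W$-spreading $q$-space (the normed case), so all the structural requirements of Theorem~\ref{7} are met with $\Omega = E$. Hence Theorem~\ref{7} tells us that the set of non-injective maps in $\mathcal{Q}(^mE;V)$ is either $\{0\}$ or pointwise spaceable, and to rule out the first alternative it suffices to produce one nonzero non-injective element.

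The natural candidate is a polynomial of the form $(x^*)^m \otimes b$ with $0 \neq x^* \in E^*$ and $0 \neq b \in V$, which belongs to $\mathcal{Q}(^mE;V)$ by the very definition of a $q$-Banach polynomial ideal. First I would note that such an $x^*$ exists (Banach spaces have nontrivial dual by Hahn--Banach) and that $b$ exists since $V \neq \{0\}$ by the standing convention on non-null spaces; thus $(x^*)^m \otimes b$ is nonzero, because evaluating at any $x$ with $x^*(x) \neq 0$ gives $x^*(x)^m b \neq 0$ (here $m$ odd, or even just $m \geq 1$, guarantees $x^*(x)^m \neq 0$ when $x^*(x)\neq 0$). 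For non-injectivity, I would pick $x_0 \in E$ with $x^*(x_0) \neq 0$ and observe that $[(x^*)^m \otimes b](x_0) = x^*(x_0)^m b = x^*(-x_0)^m b = [(x^*)^m \otimes b](-x_0)$, using that $m$ is odd so $(-1)^m = -1$ and hence $x^*(-x_0)^m = -x^*(x_0)^m$\,---\,wait, that gives the negative. Let me instead choose two distinct points with the same image: since $\ker x^*$ is a hyperplane, it is infinite-dimensional (or at least nonzero) whenever $\dim E > 1$, but more simply, if $\dim E \geq 2$ pick linearly independent $x_0, x_1$ with $x^*(x_0) = x^*(x_1)$ (possible because $\ker x^* \neq \{0\}$: translate $x_0$ by a nonzero kernel vector), so that $(x^*)^m \otimes b$ sends both to $x^*(x_0)^m b$; and if $\dim E = 1$ then every $m$-homogeneous polynomial with $m$ odd on $\mathbb{R}$ is $\lambda \mapsto \lambda^m c$, which is injective, so one must instead invoke that the corollary implicitly assumes $\dim E > 1$ (as in the other corollaries) or argue that $\mathcal{N} = \{0\}$ is then the correct\,---\,trivially true\,---\,conclusion.

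The cleaner route, which I would adopt in the write-up, is: assume $\dim E > 1$ (matching Corollaries~\ref{pl7y} and \ref{3.10}); then $\ker x^*$ contains a nonzero vector $v$, so $x_0$ and $x_0 + v$ are distinct points with equal $x^*$-value for any fixed $x_0 \notin \ker x^*$, whence $(x^*)^m \otimes b$ is non-injective and nonzero; apply Theorem~\ref{7}. I do not anticipate a genuine obstacle here\,---\,the corollary is a direct instantiation of the main theorem\,---\,the only point requiring care is the degenerate case $\dim E = 1$ (where the statement should either carry the hypothesis $\dim E > 1$ or be read as vacuously giving $\mathcal{N} = \{0\}$) and the bookkeeping that $m$ odd is exactly what makes $(x^*)^m \otimes b$ fail to be identically zero-collapsing, i.e. what keeps it from being the zero map while still allowing two distinct preimages. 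Everything else\,---\,$q$-normed structure, the ideal inequality (\ref{lm9q}) giving stability, and the inclusion of rank-one-type polynomials in $\mathcal{Q}$\,---\,is already recorded in Example~\ref{is9t} and the definition of a polynomial ideal.
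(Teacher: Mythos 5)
Your proof follows essentially the same route as the paper's: $\mathcal{Q}(^mE;V)$ is an $(E,V)$-stable function space by Example~\ref{is9t}, the polynomial $(x^*)^m\otimes b$ (with $0\neq x^*\in E^*$, $0\neq b\in V$) is the nonzero non-injective witness, and Theorem~\ref{7} concludes. Your caveat about $\dim E=1$ is also correct --- for $m$ odd and one-dimensional $E$ the witness $(x^*)^m\otimes b$ is injective, so the argument (and indeed the statement) implicitly needs $\dim E>1$ as in Corollaries~\ref{pl7y} and~\ref{3.10} --- but this is a remark about the paper's hypotheses, not a gap in your proof.
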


\begin{proof} By Example \ref{is9t} we know that $\mathcal{Q}(^mE;V)$ is a $(E,V)$-stable function space. Taking $0 \neq x^* \in E^*$ and $0 \neq b \in V$, $(x^*)^m\otimes b$ is a non-null non-injective $m$-homogeneous polynomial which belongs to $\mathcal{Q}(^mE;V)$ because $\cal Q$ is a polynomial ideal. The result follows from Theorem \ref{7}.
\end{proof}

\begin{example}\rm Let $\mathbb{K} = \mathbb{R}$, let $E$ and $F$ be a Banach spaces and let $(\mathcal{Q}, \|\cdot\|_{\cal Q})$ be a $q$-Banach polynomial ideal, $0 < q \leq 1$. For any $1 \leq p \leq \infty$, every $V \in V_{p,F}$ and every $m \geq 2$ odd, from Example  \ref{6ymz} and the corollary above it follows that the subset of ${\cal Q}(^mE;V)$ of non-injective polynomials is pointwise spaceable. In particular, the sets of non-injective compact $m$-homogeneous polynomials (with the usual polynomial norm) and of non-injective nuclear $m$-homogeneous polynomials (with the nuclear norm) from $E$ to $V$, which are, in general, much smaller than the set of non-injective continuous $m$-homogeneous polynomials, are pointwise spaceable.
\end{example}

Injectivity of Lipschitz functions was thoroughly investigated in \cite{malasia}. The presence of non-null constant functions makes the pointwise spaceability of the set of non-injective bounded Lipschitz functions very simple: 

\begin{proposition}\label{pac8} Let $(M,d)$ be a metric space and let $E$ be a Banach space. Then the set of non-injective Lipschitz functions $f \colon M \longrightarrow$ E is pointwise {\rm dim}$E$-spaceable in ${\rm Lip}(M,E)$.
\end{proposition}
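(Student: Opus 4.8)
The plan is to exploit the two features of $\mathrm{Lip}(M,E)$ highlighted before the statement: it contains all non-null constant functions, and constant functions are manifestly non-injective (since $M$ has at least two points). First I would fix an arbitrary non-injective $f \in \mathrm{Lip}(M,E)$; as in Theorem \ref{7}, the one-dimensional span of $f$ already lies in the set of non-injective functions, so it suffices to produce a closed $\dim E$-dimensional subspace $W$ with $f \in W$ contained in the non-injective functions together with $0$. Pick $w_0 \neq t_0$ in $M$ with $f(w_0) = f(t_0)$.

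Next I would observe that for every $b \in E$ the constant function $c_b \colon M \to E$, $c_b(x) = b$, is Lipschitz with $\|c_b\|_{\mathrm{Lip}} = \|c_b\|_\infty = \|b\|$, and the map $b \mapsto c_b$ is a linear isometric embedding of $E$ into $\mathrm{Lip}(M,E)$; hence its image is a closed $\dim E$-dimensional subspace. The candidate subspace is then $W = \mathrm{span}\{f\} \oplus \{c_b : b \in E\}$, or more carefully the closed linear span of $f$ together with all constant functions. I would check that any element $g = \lambda f + c_b$ of this span satisfies $g(w_0) = \lambda f(w_0) + b = \lambda f(t_0) + b = g(t_0)$, so $g$ is non-injective; thus $W$ (minus possibly $0$) sits inside the non-injective functions. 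Taking the closure does no harm because the limit of functions agreeing at $w_0$ and $t_0$ still agrees there (uniform, hence pointwise, convergence), exactly as in the final paragraph of the proof of Theorem \ref{7}. Finiteness of $\dim E$, if that is the case, makes $W$ automatically closed and of dimension $\dim E + 1 \geq \dim E$; if $\dim E$ is infinite the constant functions alone already give a closed subspace of dimension $\dim E$ containing $f$ once we adjoin $\mathrm{span}\{f\}$, and one argues that adjoining one vector does not change the dimension.

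The only point requiring a little care is that $f$ itself must genuinely belong to $W$: this is why we include $\mathrm{span}\{f\}$ as a summand rather than working with constants alone, and why it is cleaner to describe $W$ as the (closed) linear span of $\{f\} \cup \{c_b : b \in E\}$. One should also confirm that this sum is direct unless $f$ is itself constant, in which case $W = \{c_b : b \in E\}$ already works; either way $\dim W \geq \dim E$. I do not expect a real obstacle here — the proposition is, as the authors say, ``very simple'' — the mild subtlety is just bookkeeping the dimension count and the containment $f \in W$ simultaneously, together with noting that the closure operation preserves the coincidence $g(w_0) = g(t_0)$.
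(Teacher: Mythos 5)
Your proposal is correct and follows essentially the same route as the paper: embed $E$ isometrically into ${\rm Lip}(M,E)$ via constant functions and take $X={\rm span}\{f\}\oplus\{i_x : x\in E\}$ (or the constants alone when $f$ is constant), checking that $\lambda f + i_x$ takes equal values at the two points where $f$ coincides. The only difference is that you invoke a closure/limit argument, which is unnecessary since the image of the isometric embedding is already closed and adding the finite-dimensional ${\rm span}\{f\}$ preserves closedness; this, and your remark about the $\dim E+1$ versus $\dim E$ bookkeeping in the finite-dimensional case, are harmless refinements of the paper's argument.
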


\begin{proof} For every $x \in E$, denote by $i_x \colon M \longrightarrow E$ the constant function $i_x(w) = x$ for every $w \in M$. It is clear that $i_x \in {\rm Lip}(M,E)$ and that the operator $x \in E \mapsto i_x \in {\rm Lip}(M,E)$ is an isometric isomorphism onto its range. Let $f \in {\rm Lip}(M,E)$ be a non-injective function. If $f$ is constant, put $X = \{i_x : x \in E\}$; and if $f$ is non-constant, put $X = {\rm span}\{f\} \oplus \{i_x : x \in E\}$. In both cases, $X$ is a closed dim$E$-dimensional subspace of ${\rm Lip}(M,E)$ containing $f$ and contained in the set of non-injective functions.
\end{proof}

It is clear that, when dealing with spaces of Lipschitz functions, it is not interesting to be confined to constant functions. In our opinion, when the mother function $f$ is non-constant, the solution above is somewhat disappointing, in the sense that 
  the resulting space $X$ is formed by functions that are constant modulo the mother function $f$. More precisely,  every function in $X$ differs from a multiple of $f$ by a constant function. The space obtained in the next application of our  main result (and its proof) contains functions of the form $\lambda f + g$ for every $g$ belonging to an infinite dimensional space formed, up to the null function, by non-constant functions. 

\begin{proposition}\label{an7e} Let $(M,d)$ be a metric space, let $W$ be a linear space and let $V$ be a $W$-spreading space. For every non-injective non-constant Lipschitz function $f \colon M \longrightarrow V$, there exists a closed infinite dimensional subspace $X$ of ${\rm Lip}(M,V) $ such that: \\
{\rm (i)} $f \in X$ and every function in $X$ is non-injective. \\
{\rm (ii)} There exists an infinite dimensional subspace $Z$ of $X$ formed, up to the null function, by non-constant functions, 
 such that $ Z \cap {\rm span}\{f\}=\{0\}$ and $\lambda f + g \in X$, hence  $\lambda f + g$ is non-injective, for every $\lambda \in \mathbb{K}$ and every $g \in Z$.

\begin{proof} We use the notation of the proof of Theorem \ref{7}.\\
(i) By Example  \ref{l0mz} we know that ${\rm Lip}(M,V)$ is a $(M,V)$-stable function space. Since $0 \neq f \in {\rm Lip}(M,V)$ is non-injective, from the proof of Theorem \ref{7} we know that $X := \overline{\psi(\ell_1)}$ is a closed infinite dimensional subspace of ${\rm Lip}(M,V)$ containing $f$ and contained in the set of non-injective functions.\\
{\rm (ii)} Also from the proof of Theorem \ref{7}, we know that $Z := {\rm span} \{f_k : k \in \mathbb{N}\}$ is an infinite dimensional subspace of $X$ such that $ Z \cap {\rm span}\{f\}=\{0\}$. Given $\lambda \in \mathbb{K}$ and $g \in Z$, there are 
$a_1, \ldots , a_k \in \mathbb{K}$  such that
$$\lambda f + g = \lambda f +  a_1f_1+ \cdots + a_kf_k= \psi ((\lambda, a_1, \dots , a_k, 0, 0, \dots )) \in \psi (\ell _1) \subseteq X.$$
All that is left to be proved is that every non-null function of $Z$ is non-constant. Given $0 \neq g \in Z$, we can write $g = a_1f_1 + \cdots + a_k f_k$ where $a_j \neq 0$ for some $j \in \{1, \ldots, k\}$. Using that $f$ is non-constant, let $w_1, w_2 \in M$ be such that $f(w_1)\neq f(w_2)$. In this case there is $i \in \mathbb{N}$ so that $(f(w_1))_i \neq (f(w_2))_i$. For every $m \in \mathbb{N}$, from (\ref{ghn4}) we get
\begin{equation} \label{eq1}
(f_m(w_1))_{n_i^{(m)}}=(f(w_1))_i \neq (f(w_2))_i = (f_m(w_2))_{n_i^{(m)}},
\end{equation}
hence $f_m(w_1) \neq f_m(w_2)$. This proves that each $f_m$ is non-constant, in particular, $f_j$ is non-constant. 
Taking the $n_i^{(j)}$-coordinates of $g(w_1)$ and $g(w_2)$, we get, for $r = 1,2$,
\begin{align}\label{eq2}
(g(w_r))_{n_i^{(j)}}&= a_1(f_1(w_r))_{n_i^{(j)}}+ \cdots + a_j(f_j(w_r))_{n_i^{(j)}} + \cdots + a_k(f_k(w_r))_{n_i^{(j)}},
\end{align}
As $n_i^{(j)} \notin \bigcup\limits_{\stackrel{\ell =1}{\ell \neq j}} ^k \mathbb{N_\ell} $, by (\ref{ghn4}) we have $(f_{\ell}(w_1))_{n_i^{(j)}}=f_{\ell}(w_2))_{n_i^{(j)}}=0$  for $\ell =1,\dots , j-1, j+1, \dots , k. $ Combining (\ref{eq1}), (\ref{eq2}) and using that $a_j \neq 0$, we conclude that
$$(g(w_1))_{n_i^{(j)}}=a _j (f_j{(w_1)})_{n_i^{(j)}}\neq a _j (f_j{(w_2)})_{n_i^{(j)}}=(g(w_2))_{n_i^{(j)}}, $$
hence $g(w_1)\neq g(w_2)$. This proves that $g$ is non-constant.
\end{proof}
\end{proposition}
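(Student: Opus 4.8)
The plan is to obtain $X$ directly from Theorem \ref{7} and then re-use the explicit construction inside its proof to produce the subspace $Z$. First, by Example \ref{l0mz}, ${\rm Lip}(M,V)$ is an $(M,V)$-stable function space, and $f$ is a non-null non-injective element of it; so Theorem \ref{7} applies and yields a closed infinite dimensional subspace $X$ of ${\rm Lip}(M,V)$ with $f \in X$ and every function in $X$ non-injective, which is already statement (i). The crucial observation is that the proof of Theorem \ref{7} does not merely assert the existence of $X$: it builds $X = \overline{\psi(\ell_1)}$ (here $q=1$ since $V$ is a $W$-spreading space), where $\psi\colon \ell_1 \longrightarrow {\rm Lip}(M,V)$ is given by $\psi((a_k)_{k=1}^\infty) = a_1 f + \sum_{j=2}^\infty a_j f_{j-1}$, and the $f_k = u_k \circ f$ are the coordinatewise spreadings of $f$ along pairwise disjoint infinite sets $\mathbb{N}_k = \{n_1^{(k)} < n_2^{(k)} < \cdots\}$, all avoiding a fixed index $j_0$ at which $f$ fails to vanish at some point of $M$.

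Second, I would define $Z := {\rm span}\{f_k : k \in \mathbb{N}\}$. From the proof of Theorem \ref{7} the family $\{f, f_1, f_2, \dots\}$ is linearly independent, hence $Z$ is infinite dimensional and $Z \cap {\rm span}\{f\} = \{0\}$. Each $f_k$ equals $\psi$ applied to the sequence with $1$ in position $k+1$ and $0$ elsewhere, so $f_k \in \psi(\ell_1) \subseteq X$ and thus $Z \subseteq X$. For $\lambda \in \mathbb{K}$ and $g = a_1 f_1 + \cdots + a_k f_k \in Z$ one has $\lambda f + g = \psi((\lambda, a_1, \dots, a_k, 0, 0, \dots)) \in \psi(\ell_1) \subseteq X$, so $\lambda f + g$ is non-injective. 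Everything in (ii) is then in place except the assertion that every non-null element of $Z$ is non-constant, and this is the one point where the hypothesis that $f$ be non-constant is used.

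Third, to prove that claim I would argue at the level of coordinates. Pick $w_1, w_2 \in M$ with $f(w_1) \neq f(w_2)$ and an index $i$ with $(f(w_1))_i \neq (f(w_2))_i$. From the defining formula of $f_m$ as a spreading, $(f_m(w))_{n_i^{(m)}} = (f(w))_i$ for every $w \in M$, so each $f_m$ separates $w_1$ and $w_2$ and is therefore non-constant. Now let $0 \neq g = a_1 f_1 + \cdots + a_k f_k$ with $a_j \neq 0$ for some $j$. Because the $\mathbb{N}_\ell$ are pairwise disjoint, $n_i^{(j)} \notin \mathbb{N}_\ell$ for every $\ell \neq j$, hence $(f_\ell(w_r))_{n_i^{(j)}} = 0$ for $\ell \neq j$ and $r = 1,2$; consequently $(g(w_r))_{n_i^{(j)}} = a_j (f_j(w_r))_{n_i^{(j)}} = a_j (f(w_r))_i$ for $r = 1,2$, and these two scalars differ since $a_j \neq 0$. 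Thus $g(w_1) \neq g(w_2)$, so $g$ is non-constant, and (ii) is proved.

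The only real obstacle is bookkeeping rather than anything conceptual: one must make sure, when invoking the proof of Theorem \ref{7}, that the supports $\mathbb{N}_k$ can be chosen pairwise disjoint and all missing $j_0$ (they can — this is exactly how Theorem \ref{7} is arranged), so that evaluating a finite combination $g = \sum a_\ell f_\ell$ at a coordinate lying in $\mathbb{N}_j$ isolates the single term $a_j f_j$, and that the coordinate $n_i^{(j)}$ witnessing non-constancy of $f_j$ is simultaneously one at which all the other $f_\ell$ vanish. No estimate beyond those already in the proof of Theorem \ref{7} is needed.
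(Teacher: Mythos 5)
Your proposal is correct and follows essentially the same route as the paper's own proof: take $X=\overline{\psi(\ell_1)}$ and $Z={\rm span}\{f_k:k\in\mathbb{N}\}$ from the proof of Theorem \ref{7}, note $\lambda f+g=\psi((\lambda,a_1,\dots,a_k,0,\dots))\in X$, and prove non-constancy of a non-null $g=\sum a_\ell f_\ell$ by evaluating the coordinate $n_i^{(j)}$ (with $a_j\neq 0$ and $(f(w_1))_i\neq(f(w_2))_i$), where disjointness of the $\mathbb{N}_\ell$ kills every other term. The bookkeeping you flag (pairwise disjoint supports avoiding $j_0$) is exactly how the paper's construction is set up, so nothing further is needed.
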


The proof of Proposition \ref{pac8} does not apply to ${\rm Lip}_0$-spaces because these spaces do not contain non-null constant functions. But our main theorem applies: the following corollary is a combination of Theorem \ref{7} and Example \ref{km6u}.

\begin{corollary} Let $(M,d)$ be a pointed metric space, let $W$ be a linear space and let $V$ be a $W$-spreading space. Then the set of non-injective Lipschitz functions is either $\{0\}$ or pointwise spaceable in ${\rm Lip}_0(M,V)$.
\end{corollary}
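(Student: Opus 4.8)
The plan is to deduce this corollary directly from Theorem \ref{7} by verifying that ${\rm Lip}_0(M,V)$ falls under its hypotheses. First I would invoke Example \ref{km6u}, which already establishes that ${\rm Lip}_0(M,V)$ is an $(M,V)$-stable function space whenever $V$ is a Banach space; since $V$ is here a $W$-spreading space, hence in particular a Banach space (the normed case $q=1$), this applies verbatim. The set $\Omega$ of Theorem \ref{7} is taken to be $M$ (nonempty, indeed with at least two points by the standing convention on metric spaces), and the role of the $W$-spreading $q$-space is played by $V$ with $q=1$. With these identifications, Theorem \ref{7} says precisely that the subset of ${\rm Lip}_0(M,V)$ consisting of non-injective maps is either $\{0\}$ or pointwise spaceable, which is the assertion.

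There is essentially nothing further to prove, but I would add one sentence of bookkeeping to make the statement airtight. One must check that $\{0\}$ is genuinely a possible outcome and that the dichotomy is stated correctly: if every nonzero $f \in {\rm Lip}_0(M,V)$ is injective, then $\mathcal{N} = \{0\}$ and the first alternative holds; otherwise there is $0 \neq f \in \mathcal{N}$, the hypothesis $\mathcal{N} \neq \{0\}$ of (the proof of) Theorem \ref{7} is met, and we obtain, for every such $f$, a closed infinite dimensional subspace of ${\rm Lip}_0(M,V)$ containing $f$ and contained in $\mathcal{N}$ — that is, $\mathcal{N}$ is pointwise spaceable. Since infinite dimensional Banach spaces have dimension at least $\mathfrak{c}$, "pointwise spaceable" here is exactly "pointwise $\mathfrak{c}$-spaceable", matching the terminology fixed in the introduction.

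The only point that requires a moment's thought — and hence the closest thing to an obstacle — is confirming that a $W$-spreading space is complete and normed in the precise sense demanded by the ambient example: Example \ref{km6u} is phrased for a Banach space $E$, and a $W$-spreading space is by definition a linear subspace of $W^{\mathbb{N}}$ carrying a complete $1$-norm, i.e.\ a Banach space in its own right. So it slots in as the space "$E$" of that example with no modification, and conditions (i) and (ii) of Definition \ref{2xum} are inherited as recorded there. Consequently the proof is the one-line remark that the corollary is the conjunction of Theorem \ref{7} (with $\Omega = M$, $q = 1$) and Example \ref{km6u}, together with the observation above that the $\{0\}$-versus-pointwise-spaceable dichotomy transfers directly.

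\begin{proof} By Example \ref{km6u}, ${\rm Lip}_0(M,V)$ is an $(M,V)$-stable function space, where we use that the $W$-spreading space $V$ is, in particular, a Banach space. Since $M$ is a nonempty set and $V$ is a $W$-spreading space (the case $q = 1$), Theorem \ref{7} applies with $\Omega = M$ and yields that the subset of ${\rm Lip}_0(M,V)$ of non-injective maps is either $\{0\}$ or pointwise spaceable.
\end{proof}
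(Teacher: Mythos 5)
Your proof is correct and is exactly the paper's argument: the authors also obtain this corollary as the direct combination of Theorem \ref{7} (with $\Omega = M$, $q=1$) and Example \ref{km6u}, which shows ${\rm Lip}_0(M,V)$ is an $(M,V)$-stable function space. Your extra remark that a $W$-spreading space is itself a Banach space, so it can play the role of $E$ in Example \ref{km6u}, is a sensible piece of bookkeeping but introduces nothing beyond the paper's route.
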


In the same way we did before, one obtains concrete applications of Proposition \ref{an7e} and of the corollary above using the spaces from Example \ref{6ymz}.

%

\bigskip
\noindent Mikaela Aires~~~~~~~~~~~~~~~~~~~~~~~~~~~~~~~~~~~~~~~~~~~~~~Geraldo Botelho~\\
Instituto de Matem\'atica e Estat\'istica~~~~~~~~~~~~~~~Faculdade de Matem\'atica\\
Universidade de S\~ao Paulo~~~~~~~~~~~~~~~~~~~~~~~~~~~~~\hspace*{0,1em}Universidade Federal de Uberl\^andia\\
05.508-090 -- S\~ao Paulo -- Brazil~~~~~~~~~~~~~~~~~~~~~~\,\hspace*{0,1em}38.400-902 -- Uberl\^andia -- Brazil\\
e-mail: mikaela\_aires@ime.usp.br~~~~~~~~~~~~~~~~~~~~~\,e-mail: botelho@ufu.br
\bigskip


%

%
%
%

\end{document}